\documentclass[12pt]{amsart}
\usepackage{graphicx}
\usepackage[headings]{fullpage}
\usepackage{amssymb,epic,eepic,epsfig,amsbsy,amsmath,amscd}
\numberwithin{equation}{section}
                        \textwidth16cm
                        \textheight23cm
                        \topmargin-1cm
                        \oddsidemargin 0.2cm
                        \evensidemargin 0.2cm
                        \theoremstyle{plain}
\usepackage{mathrsfs}

\newcommand{\psdraw}[2]
         {\begin{array}{c} \hspace{-1.3mm}
         \raisebox{-4pt}{\psfig{figure=#1.eps,width=#2}}
         \hspace{-1.9mm}\end{array}}
         
\newcommand\no[1]{}

\newtheorem{theorem}{Theorem}[section]

\newtheorem{lemma}[theorem]{Lemma}
\newtheorem{corollary}[theorem]{Corollary}
\newtheorem{proposition}[theorem]{Proposition}

\theoremstyle{definition}
\newtheorem{remark}[theorem]{Remark}

\newcommand{\C}{\Bbb C}

\newcommand{\tr}{{\mathrm{tr}\,}}
\newcommand{\la}{\langle}
\newcommand{\ra}{\rangle}

\def\BC{\mathbb C}
\def\C{\mathbb C}

\def\BZ{\mathbb Z}
\def\BR{\mathbb R}

\def\CL{\mathcal L}

\def\fb{\mathfrak b}

\def\la{\langle}
\def\ra{\rangle}

\def\ve{\varepsilon}
\def\be { \begin{equation} }
\def\ee { \end{equation} }

\begin{document}

\title[The A-polynomial of twisted Whitehead links]
{The A-polynomial $2$-tuple of twisted Whitehead links}

\author{Anh T. Tran}

\begin{abstract} 
We compute the A-polynomial 2-tuple of twisted Whitehead links.
As applications, we determine canonical components of twisted Whitehead links and 
give a formula for the volume of twisted Whitehead link cone-manifolds.
\end{abstract}

\thanks{2000 {\it Mathematics Subject Classification}.
Primary 57M27, Secondary 57M25.}

\thanks{{\it Key words and phrases.\/}
canonical component, cone-manifold, hyperbolic volume, the A-polynomial,
twisted Whitehead link, two-bridge link.}

\address{Department of Mathematical Sciences, The University of Texas at Dallas, 
Richardson, TX 75080, USA}
\email{att140830@utdallas.edu}

\maketitle

%%%%%%%%%%%%%%%%%%%%%%%%%%%%%%%%%%%%%%%%%%%%%%%%%%%%%%
\section{Introduction}\label{sec:intro}

The A-polynomial of a knot in the 3-sphere $S^3$ was introduced by Cooper, Culler, Gillet, Long and Shalen \cite{CCGLS} in the 90's. 
It describes the $SL_2(\BC)$-character variety of the knot complement as viewed from the boundary torus. 
The A-polynomial carries a lot of information about the topology of the knot.
 For example, it distinguishes the unknot from other knots \cite{BoZ, DG} 
 and the sides of the Newton polygon of the A-polynomial give rise to incompressible surfaces in the knot complement \cite{CCGLS}. 
 
 The A-polynomial was generalized to links by Zhang \cite{Zh} about ten years later. For an $m$-component link in $S^3$, 
 Zhang defined a polynomial $m$-tuple link invariant called the A-polynomial $m$-tuple. 
 The A-polynomial $1$-tuple of a knot is nothing but the A-polynomial defined in \cite{CCGLS}.
 The A-polynomial $m$-tuple also caries important information about the topology of the link. 
 For example, it can be used to construct concrete examples of hyperbolic link manifolds with non-integral traces \cite{Zh}.
 
 Finding an explicit formula for the A-polynomial is a challenging problem. 
 So far, the A-polynomial has been computed for a few classes of knots including 
 two-bridge knots $C(2n, p)$ (with $1 \le p \le 5$) in Conway's notation \cite{HS, Pe, Ma, HL},  
 $(-2,3,2n+1)$-pretzel knots \cite{TY, GM}. 
 It should be noted that $C(2n,p)$ is the double twist knot $J(2n,-p)$ in the notation of \cite{HS}. 
 Moreover, $C(2n,1)$ is the torus knot $T(2,2n+1)$ and $C(2n,2)$ is known as a twist knot. 
 A cabling formula for the A-polynomial of a cable knot in $S^3$ has recently been given in \cite{NZ}. 
 Using this formula, Ni and Zhang \cite{NZ} has computed the A-polynomial of an iterated torus knot explicitly.
 
 In this paper we will compute the A-polynomial $2$-tuple for 
 a family of 2-component links called twisted Whitehead links. 
 As applications, we will determine canonical components of twisted Whitehead links and 
give a formula for the volume of twisted Whitehead link cone-manifolds. 
 For $k \ge 0$, the $k$-twisted Whitehead link $W_k$ is the 2-component link depicted in Figure \ref{Wh}. 
Note that $W_0$ is the torus link $T(2,4)$ and $W_1$ is the Whitehead link. 
Moreover, $W_k$ is the two-bridge link $C(2,k,2)$ in Conway's notation 
and is $\fb(4k+4,2k+1)$ in Schubert's notation.  
These links are all hyperbolic except for $W_0$. 
 
\begin{figure}[htpb]
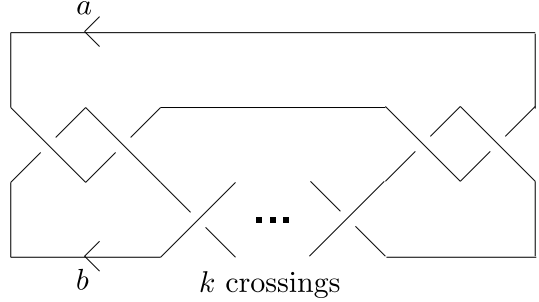
 \label{Wh}
$$\psdraw{Whitehead1}{2.75in}$$
\caption{The $k$-twisted Whitehead link $W_k$.}
\end{figure}

The A-polynomial $2$-tuple of the twisted Whitehead link $W_k$ is a polynomial $2$-tuple 
$[A_1(M,L), A_2(M,L)]$ given as follows. 

\begin{theorem} \label{thm:A-polynomial}
If $k=2n-1$ then $A_1(M,L) = (L-1)F(M,L)$ where
\begin{eqnarray*}
F(M,L) &=& \sum_{i=0}^n \left\{ \binom{n+1+i}{2i+1} - \binom{n-1+i}{2i+1} \right\} 
(M - M^{-1})^{2i} \left( \frac{L -1}{L+1} \right)^{2i} \\
&& + \, \sum_{i=0}^{n-1} \binom{n+i}{2i+1} (M + M^{-1}) (M - M^{-1})^{2i+1} \left( \frac{L-1}{L+1} \right)^{2i+1}.
\end{eqnarray*}

If $k=2n$ then $A_1(M,L) = (LM^2-1)G(M,L)$ where
\begin{eqnarray*}
G(M,L) &=& \sum_{i=0}^n \left\{ \binom{n+1+i}{2i+1} + \binom{n+i}{2i+1} \right\} 
(M - M^{-1})^{2i+1} \left( \frac{LM^2 -1}{LM^2+1} \right)^{2i+1} \\
&& + \, \sum_{i=0}^{n} \binom{n+i}{2i} (M + M^{-1}) (M - M^{-1})^{2i} \left( \frac{LM^2-1}{LM^2+1} \right)^{2i}.
\end{eqnarray*}

In both cases we have $A_1(M,L) = A_2(M,L)$.
\end{theorem}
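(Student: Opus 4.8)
The plan is to use the two-bridge structure of $W_k = C(2,k,2)$ together with the standard Riley parametrization of non-abelian $SL_2(\BC)$ representations, and then to eliminate a single trace coordinate. First I would record the two-generator presentation $\pi_1(S^3 \setminus W_k) = \la a,b \mid aw = wb\ra$, where $a,b$ are meridians of the two components and $w = w_k$ is the palindromic relator word read off from the Conway form $(2,k,2)$. Since the relation gives $a = wbw^{-1}$, the two meridians are conjugate, so every non-abelian representation sends them to matrices with a common eigenvalue $M$; this justifies setting
$$\rho(a) = \begin{pmatrix} M & 1 \\ 0 & M^{-1} \end{pmatrix}, \qquad \rho(b) = \begin{pmatrix} M & 0 \\ 2-y & M^{-1} \end{pmatrix},$$
with $y$ the one remaining free trace parameter. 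The relation $\rho(a)\rho(w) = \rho(w)\rho(b)$ then collapses to a single scalar equation, a Riley polynomial $\phi_k(M,y) = 0$ defining the non-abelian characters.

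Next I would handle the $k$ twists. Because the word $w$ (and the two longitude words) contain a block $B^{k}$ for a fixed $2\times 2$ matrix $B$, I would replace $B^k$ using Chebyshev polynomials of the second kind via Cayley--Hamilton, $B^{k} = S_{k-1}(\tr B)\,B - S_{k-2}(\tr B)\,I$. This is exactly what produces the binomial-coefficient sums in $F$ and $G$ and the powers of $(M-M^{-1})$. Evaluating $\rho$ on the longitude $\lambda_1$ of the first component gives its eigenvalue $L$ as an explicit rational function $L = L(M,y)$, and $A_1(M,L)$ is obtained by eliminating $y$ between $\phi_k(M,y)=0$ and this expression.

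The hard part will be the elimination together with recognizing the closed form. The decisive step is the change of variable $y \mapsto \tfrac{L-1}{L+1}$ when $k = 2n-1$ and $y \mapsto \tfrac{LM^2-1}{LM^2+1}$ when $k = 2n$; under these substitutions I expect the Chebyshev sums to collapse into the stated binomial expressions, an identity I would prove by induction on $n$ (or through the generating function of the $S_k$). I would also have to split off the factor $(L-1)$ (resp. $(LM^2-1)$) coming from the reducible/abelian characters, whose precise form is dictated by the parity of $k$ (reflecting the linking number between the two components), leaving $F$ (resp. $G$) as the non-abelian factor.

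Finally, for the equality $A_1(M,L) = A_2(M,L)$ I would invoke the symmetry of the twisted Whitehead link. The palindromic relator $w_k$ reflects an orientation-preserving homeomorphism of $S^3$ interchanging the two components of $W_k$ (consistent with $a = wbw^{-1}$ and the symmetric Conway word $(2,k,2)$); this homeomorphism carries the meridian--longitude pair $(a,\lambda_1)$ of the first cusp to that of the second while fixing the eigenvalue $M$, so the second longitude eigenvalue satisfies the very same relation with $M$. Hence the two A-polynomials coincide, which is the final assertion of the theorem.
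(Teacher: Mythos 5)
Your setup contains a genuine error that derails the whole computation. You take the presentation $\la a,b \mid aw = wb \ra$ and conclude that the two meridians are conjugate, hence share a common eigenvalue $M$. But $W_k$ is a two-component \emph{link}: $a$ and $b$ are meridians of different components, they represent distinct generators of $H_1(E_{W_k}) \cong \BZ^2$, so they cannot be conjugate, and the correct two-bridge relation is $aw = wa$ (the paper uses $w=(bab^{-1}a^{-1})^na(a^{-1}b^{-1}ab)^n$ or $(bab^{-1}a^{-1})^n bab (a^{-1}b^{-1}ab)^n$). Consequently the Riley parametrization needs \emph{two} independent eigenvalue parameters $s_1, s_2$ plus the off-diagonal parameter $u$, not your two-parameter knot-style form. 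This matters because of a second, related omission: by the very definition of the A-polynomial $m$-tuple (Zhang), before projecting to the coordinates $(M_1,L_1)$ of the first cusp one must restrict to the subvariety where the trace functions of the \emph{other} component's meridian and longitude satisfy $f_{\mu_2}=f_{\lambda_2}=0$, i.e.\ $s_2^2 = 1$ and $\overline{w}_{11}^2 = 1$. The paper's proof hinges on setting $s_2 = \pm 1$ and only then performing the change of variables $t = z - x = (s_1 - s_1^{-1})\frac{L_a-1}{L_a+1}$, $v = 2+t^2$, which makes the Chebyshev sums collapse via the expansion $S_k(2+q)=\sum_i \binom{k+1+i}{2i+1}q^i$. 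Your substitute condition $s_1 = s_2 = M$ describes a different slice of the character variety (equal meridian eigenvalues), and eliminating $y$ from it would compute a different polynomial, not $A_1(M,L)$.

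A secondary conceptual error: you attribute the factors $(L-1)$ and $(LM^2-1)$ to the reducible/abelian characters. By definition the reducible factor $L-1$ is already discarded from $A_1$; in the paper these factors arise from the \emph{nonabelian} non-canonical components of the character variety, namely $v = 2\cos\frac{j\pi}{n}$ (where $S_{n-1}(v)=0$ forces $w_{11}=s_1$, hence $L_a = 1$) for $k=2n-1$, and $v=2\cos\frac{(2j-1)\pi}{2n+1}$ (forcing $L_aM_a^2=1$) for $k=2n$. On the positive side, your use of Cayley--Hamilton and Chebyshev polynomials for the twist blocks, your anticipated substitutions $\frac{L-1}{L+1}$ and $\frac{LM^2-1}{LM^2+1}$, and your symmetry intuition for $A_1=A_2$ (the paper obtains it because $\overline{w}$, gotten by exchanging $a$ and $b$, yields the identical computation) all match the actual mechanism; but the proof cannot be repaired without replacing the conjugacy claim by the two-eigenvalue Riley form and inserting the restriction $s_2^2 = \overline{w}_{11}^2 = 1$ demanded by the definition of the A-polynomial $2$-tuple.
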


It should be noted that in the definition of the A-polynomial $m$-tuple of an $m$-component link, 
we discard the irreducible component of the $SL_2(\BC)$-character variety 
containing only characters of reducible representations and 
therefore consider only the nonabelian $SL_2(\BC)$-character variety of the link group. 

For a link $\CL$ in $S^3$, let $E_{\CL} = S^3 \setminus \CL$ denote the link exterior. 
The link group of $\CL$ is defined to be the fundamental group $\pi_1(E_{\CL})$ of $E_{\CL}$. 
It is known that the link group of the twisted Whitehead link $W_k$ has a standard two-generator presentation 
of a two-bridge link group $\pi_1(E_{W_k}) = \la a,b \mid aw =wa \ra$, 
where $a,b$ are meridians depicted in Figure \ref{Wh} and 
$w$ is a word in the letters $a,b$. More precisely, we have
$w=(bab^{-1}a^{-1})^na(a^{-1}b^{-1}ab)^n$ if $k=2n-1$ and 
$w=(bab^{-1}a^{-1})^n bab (a^{-1}b^{-1}ab)^n$ if $k=2n$. 

For a presentation $\rho: \pi_1(E_{W_k}) \to SL_2(\BC)$ we let $x,y,z$ denote the traces 
of the images of $a,b,ab$ respectively. We also let $v$ denote the trace of the image of $bab^{-1}a^{-1}$. 
Explicitly, we have 
$v = x^2 + y^2 + z^2 - xyz -2.$ It was shown in \cite{Tran-character} that 
the nonabelian $SL_2(\BC)$-character variety of $W_k$ has exactly $\lfloor \frac{k}{2} \rfloor $ irreducible components. 
For a hyperbolic link, there are distinguished components of the $SL_2(\BC)$-character variety called the canonical components. 
They contain important information about the hyperbolic structure of the link. 
By using the formula of the A-polynomial in Theorem \ref{thm:A-polynomial}, we can determine 
the canonical component of the hyperbolic twisted Whitehead link $W_k$ (with $k \ge 1$) as follows. 

\begin{theorem} \label{thm:canonical}
If $k=2n-1$ then the canonical component of $W_k$ is the zero set of the polynomial 
$(xy - v z) S_{n-1}(v) - (xy - 2z)S_{n-2}(v)$.

If $k=2n$ then the canonical component of $W_k$ is the zero set of the polynomial 
$z S_{n}(v) - (xy -  z) S_{n-1}(v)$.
\end{theorem}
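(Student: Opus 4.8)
The plan is to read the canonical component off the A-polynomial of Theorem \ref{thm:A-polynomial}, using the explicit description of $\X$ and its factorization from \cite{Tran-character}. Writing a nonabelian representation in the standard form $\rho(a)=\left(\begin{smallmatrix} M & 1 \\ 0 & M^{-1}\end{smallmatrix}\right)$, $\rho(b)=\left(\begin{smallmatrix} N & 0 \\ -t & N^{-1}\end{smallmatrix}\right)$, the relation $aw=wa$ is equivalent to the vanishing of the $(2,1)$-entry of $\rho(w)$, which is the single defining equation of $\X$ in the coordinates $x=M+M^{-1}$, $y=N+N^{-1}$, $z=\tr\rho(ab)$. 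Expanding $\rho(w)$ for $w=c^na d^n$ (case $k=2n-1$) and $w=c^n bab\,d^n$ (case $k=2n$), where $c=bab^{-1}a^{-1}$ and $d=a^{-1}b^{-1}ab$ both have trace $v$, via the Cayley--Hamilton identities $\rho(c)^n=S_{n-1}(v)\rho(c)-S_{n-2}(v)I$ and $\rho(d)^n=S_{n-1}(v)\rho(d)-S_{n-2}(v)I$, writes this equation as a polynomial in $x,y,z$ and $S_{n-1}(v),S_{n-2}(v)$ (resp.\ $S_n(v),S_{n-1}(v)$). The first task is to verify that the polynomial in the statement is one of the irreducible factors of this defining equation listed in \cite{Tran-character}.

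To single out the canonical factor I would exploit Theorem \ref{thm:A-polynomial} directly. The key observation is that the binomial coefficients in $F$ and $G$ are exactly the coefficients of the Chebyshev expansions in powers of $v-2$, namely $S_j(v)=\sum_i\binom{j+1+i}{2i+1}(v-2)^i$ and $\sum_i\binom{j+i}{2i}(v-2)^i=S_j(v)-S_{j-1}(v)$. Setting $\xi=(M-M^{-1})\frac{L-1}{L+1}$ and $\zeta=(M-M^{-1})\frac{LM^2-1}{LM^2+1}$, and using the longitude relation $v-2=\xi^2=(x^2-4)\left(\frac{L-1}{L+1}\right)^2$ (case $k=2n-1$), resp.\ $v-2=\zeta^2$ (case $k=2n$), that is established while proving Theorem \ref{thm:A-polynomial}, these identities collapse $F$ and $G$, after the recursion $S_j(v)=vS_{j-1}(v)-S_{j-2}(v)$, into the compact Chebyshev forms
\[
F=(v+x\xi)\,S_{n-1}(v)-2\,S_{n-2}(v),\qquad G=(x+\zeta)\,S_n(v)-(x-\zeta)\,S_{n-1}(v).
\]
This already exhibits the geometric factor of the A-polynomial as governed by the same polynomials $S_j(v)$ appearing in the statement.

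It then remains to translate these forms back into trace coordinates. Here I would reuse the substitutions obtained in the proof of Theorem \ref{thm:A-polynomial}, namely the above longitude relations together with the expression of $z=\tr\rho(ab)$ through $M$ and $L$ on the canonical component; eliminating the longitude data should convert $F=0$ into $(xy-vz)S_{n-1}(v)-(xy-2z)S_{n-2}(v)=0$ and $G=0$ into $zS_n(v)-(xy-z)S_{n-1}(v)=0$. Checking that this elimination is a well-defined birational correspondence onto the stated zero set, compatible with the factorization of the defining equation of $\X$, is the bulk of the computation; the cases $n=1$, which yield $xy-vz=0$ for the Whitehead link $W_1$ and $z(v+1)-xy=0$ for $W_2$, serve as the base of an induction on $n$ driven by the Chebyshev recursion.

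The main obstacle will be proving, uniformly in $n$, that the factor produced this way is genuinely the \emph{canonical} component rather than some other nonabelian component. The clean way to secure this is to pin down the discrete faithful character $\chi_0$: its meridians are parabolic, so $x=y=2$ and $v=z^2-4z+6$, and one must show that the displayed polynomial, and no other factor of the defining equation, vanishes at the value of $z$ singled out by the complete hyperbolic structure. I expect this to follow from the fact that $F$ and $G$ are, by Theorem \ref{thm:A-polynomial}, the factors of the A-polynomial carrying the complete structure, once the denominators $L+1$ (resp.\ $LM^2+1$) are cleared so that the totally parabolic peripheral point is located correctly on the integral A-polynomial; a degree count on the Chebyshev polynomials $S_j(v)$ should then show that this separation of the geometric factor from the remaining factors of $\X$ persists for every $n$.
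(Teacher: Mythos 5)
Your computational skeleton is fine and matches what the paper does en route: the Riley polynomial factorization from \cite{Tran-character}, the Cayley--Hamilton/Chebyshev expansion of $\rho(w)$, and the collapsed forms $F=(v+x\xi)S_{n-1}(v)-2S_{n-2}(v)$, $G=(x+\zeta)S_n(v)-(x-\zeta)S_{n-1}(v)$ with $v-2=\xi^2$ (resp.\ $\zeta^2$) are all consistent with the proof of Theorem \ref{thm:A-polynomial}. But the decisive step --- proving that the stated factor, rather than one of the factors $v-2\cos\frac{j\pi}{n}$ (resp.\ $v-2\cos\frac{(2j-1)\pi}{2n+1}$), is the \emph{canonical} component --- is exactly where your proposal has a genuine gap. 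You propose to locate the discrete faithful character $\chi_0$ and check pointwise which factor vanishes there, but you never supply the argument: you would need the actual value of $z$ at the complete structure, which is not available in closed form for general $n$, and your fallback ("$F$ and $G$ are the factors carrying the complete structure by Theorem \ref{thm:A-polynomial}") is circular, since Theorem \ref{thm:A-polynomial} computes the full A-polynomial without identifying which factor belongs to the canonical component --- that identification is precisely Theorem \ref{thm:canonical}. Worse, the pointwise test is intrinsically delicate here: at the complete structure the peripheral holonomy is parabolic, so $(M,L)=(\pm1,\pm1)$, and this point lies on $\{L=1\}$; since the components $v=\mathrm{const}$ restrict to exactly $L_a=1$ (Proposition \ref{components}), vanishing of boundary data at $\chi_0$ does not by itself exclude them. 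Nor does a dimension or degree count help: every irreducible factor of $w'_{21}$, including the $v$-constant ones, defines a $2$-dimensional hypersurface in $(x,y,z)$-space.

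The paper closes this gap with a tool you never invoke: Zhang's Proposition \ref{prop:hyp} (\cite[Theorem 3(2)]{Zh}), which says the factor $A_j^{\mathrm{can}}(M,L)$ coming from a canonical component has an irreducible factor whose Newton polygon has at least two distinct boundary slopes, hence at least $3$ monomials. From the proof of Theorem \ref{thm:A-polynomial}, the components $v-2\cos\frac{j\pi}{n}=0$ contribute only $L-1$ and the components $v-2\cos\frac{(2j-1)\pi}{2n+1}=0$ contribute only $LM^2-1$ to the A-polynomial; each of these is a two-monomial factor with a single boundary slope, so neither can come from the canonical component. By elimination the canonical component must be the remaining factor, namely $(xy-vz)S_{n-1}(v)-(xy-2z)S_{n-2}(v)$ for $W_{2n-1}$ and $zS_n(v)-(xy-z)S_{n-1}(v)$ for $W_{2n}$. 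To repair your write-up, replace the "pin down $\chi_0$" step with this boundary-slope criterion (or prove an equivalent statement, e.g.\ that the restriction of the canonical component to the boundary torus cannot have image contained in $\{L=1\}$ or $\{LM^2=1\}$, which is the content of Zhang's theorem); as it stands, your argument does not establish the theorem for any $n\ge 2$.
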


Here the $S_k(v)$ are the Chebychev polynomials of the second kind defined by 
$S_0(v)=1$, $S_1(v)=v$ and $S_{k}(v) = v S_{k-1}(v) - S_{k-2}(v)$ 
for all integers $k$. Explicitly, we have $$S_k(v) = \sum_{0 \le i \le k/2} (-1)^i \binom{k-i}{k-2i} v^{k-2i}$$ for $k \ge 0$, 
$S_k(v) = - S_{-k-2}(v)$ for $k \le -2$, and $S_{-1}(v) =0$. 

For a hyperbolic link $\CL \subset S^3$, let $\rho_{\text{hol}}$ be a holonomy representation of $\pi_1(E_\CL)$ into $PSL_2(\BC)$. Thurston \cite{Th} showed that $\rho_{\text{hol}}$ can be deformed into an $m$-parameter family $\{\rho_{\alpha_1, \cdots, \alpha_m}\}$ of representations to give a corresponding family $\{E_\CL(\alpha_1, \cdots, \alpha_m)\}$ of singular complete hyperbolic manifolds, where $m$ is the number of components of $\CL$. In this paper we will consider only the case where all of $\alpha_i$'s are equal to a single parameter $\alpha$. In which case we also denote $E_\CL(\alpha_1, \cdots, \alpha_m)$ by $E_\CL(\alpha)$. These $\alpha$'s and $E_\CL(\alpha)$'s are called the cone-angles and hyperbolic cone-manifolds of $\CL$, respectively. 

We consider the complete hyperbolic structure on a link complement as the cone-manifold structure of cone-angle zero. It is known that for a two-bridge knot or link $\CL$ there exists an angle $\alpha_\CL \in [\frac{2\pi}{3},\pi)$ such that $E_\CL(\alpha)$ is hyperbolic for $\alpha \in (0, \alpha_\CL)$, Euclidean for $\alpha =\alpha_\CL$, and spherical for $\alpha \in (\alpha_\CL, \pi)$  \cite{HLM, Ko, Po, PW}. Explicit volume formulas for hyperbolic cone-manifolds of knots were known for $4_1$ \cite{HLM, Ko, Ko2, MR}, $5_2$ \cite{Me},  twist knots \cite{HMP} and two-bridge knots $C(2n,3)$ \cite{HL}. Recently, the volume of double twist knot cone-manifolds has been computed in \cite{Tr-volume}. 
We should remark that a formula for the volume of the cone-manifold of $C(2n,4)$ has just been given in \cite{HLMR-knot}. However, with an appropriate change of variables, this formula has already obtained in \cite{Tr-volume}, since $C(2n,4)$ is the double twist knot $J(2n,-4)$. 

In this paper we are interested in hyperbolic cone-manifolds of links. 
Explicit volume formulas for hyperbolic cone-manifolds of links have been only known for $5_1^2$ \cite{MV}, $6_2^2$ \cite{Me-link},  $6_3^2$ \cite{DMM} and $7_3^2$ \cite{HLMR-link}. From the proof of Theorem \ref{thm:A-polynomial}, we can compute the volume of the hyperbolic cone-manifold of the twisted Whitehead link $W_k$ as follows. 

Let
$$
R_{W_k}(s,z) = \begin{cases} x^2S_{n-1}(v) - z S_n(v)- (x^2-z)S_{n-2}(v) &\mbox{if } k=2n-1 \\ 
z S_n(v) - (x^2-z)S_{n-1}(v) & \mbox{if } k=2n, \end{cases}
$$
where $x = s+s^{-1}$ and $v = 2x^2+z^2-x^2z-2$.

\begin{theorem} \label{thm:volume}
For $\alpha \in (0, \alpha_{W_k})$ we have
$$\emph{Vol} \, E_{W_k}(\alpha) = \int_{\alpha}^{\pi} 2\log \left| \frac{z - (s^{-2}+1)}{z- (s^2+1) } \right| d\omega$$ 
where $s=e^{i\omega/2}$ and $z$ (with $\text{Im} \, z \ge 0$) satisfy $R_{W_k}(s,z) =0$.
\end{theorem}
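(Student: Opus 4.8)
The plan is to differentiate the volume with respect to the cone angle via the Schläfli formula and then integrate. First I would parametrize the family of cone-manifold holonomy representations $\rho_\omega\colon \pi_1(E_{W_k}) \to SL_2(\BC)$ so that each meridian has rotation angle equal to the cone angle $\omega$; concretely $\rho_\omega(a)$ and $\rho_\omega(b)$ have trace $2\cos(\omega/2) = s + s^{-1}$ with $s = e^{i\omega/2}$, so that $x = y = s + s^{-1}$. The symmetry of $W_k$ interchanging its two components (the same symmetry responsible for $A_1 = A_2$ in Theorem \ref{thm:A-polynomial}) lets us take the two meridian traces equal, and the holonomy lies on the canonical component. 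By Theorem \ref{thm:canonical}, after substituting $y = x$ and $v = 2x^2 + z^2 - x^2 z - 2$, this component is cut out precisely by $R_{W_k}(s,z) = 0$. Thus along the family $z$ is the trace of $\rho_\omega(ab)$, determined implicitly by $R_{W_k}(s,z) = 0$, with $\mathrm{Im}\, z \ge 0$ selecting the geometric branch.

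The Schläfli formula for hyperbolic cone-manifolds gives $dV = -\tfrac12\sum_{i=1}^2 \ell_i\, d\omega$, where $\ell_i$ is the length of the $i$-th singular geodesic; by symmetry $\ell_1 = \ell_2 = \ell$, so $\tfrac{dV}{d\omega} = -\ell$. The core geodesic is the axis of the loxodromic element $\rho_\omega(\lambda)$ for $\lambda$ a longitude, and its complex length is twice the logarithm of the eigenvalue $L$ of $\rho_\omega(\lambda)$; hence $\ell = 2\log|L|$. The central computation is therefore to evaluate $L$ explicitly along the family. I would write the longitude as an explicit word in the meridians $a,b$ (the standard palindromic two-bridge longitude built from $w = (bab^{-1}a^{-1})^n \cdots (a^{-1}b^{-1}ab)^n$, corrected by a power of a meridian to be null-homologous), put $\rho_\omega(a)$ and $\rho_\omega(b)$ into a normal form with a single off-diagonal unknown, and compute $\rho_\omega(\lambda)$ by expanding the block powers $(\,\cdot\,)^n$ through the Chebyshev polynomials $S_n(v)$. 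Reducing the result modulo $R_{W_k}(s,z) = 0$ and the trace identity $v = 2x^2 + z^2 - x^2 z - 2$ should collapse $L$ to $L = \dfrac{z - (s^{-2}+1)}{z - (s^2+1)}$ up to a unimodular factor, so that $\ell = 2\log|L| = 2\log\left|\dfrac{z - (s^{-2}+1)}{z - (s^2+1)}\right|$.

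It then remains to integrate $V'(\omega) = -g(\omega)$ with $g(\omega) = 2\log\left|\frac{z-(s^{-2}+1)}{z-(s^2+1)}\right|$. Since $E_{W_k}(\alpha_{W_k})$ carries a Euclidean structure we have $V(\alpha_{W_k}) = 0$, so integrating from $\alpha$ gives $V(\alpha) = \int_\alpha^{\alpha_{W_k}} g\,d\omega$. To raise the upper limit to $\pi$ I would observe that on the spherical range $\omega \in (\alpha_{W_k}, \pi)$ the holonomy is conjugate into $SU(2)$, so $z$ is real; since $|s| = 1$ gives $s^{-2}+1 = \overline{s^2+1}$, the factors $z - (s^{-2}+1)$ and $z-(s^2+1)$ are complex conjugates, whence $g \equiv 0$ on that range and $\int_{\alpha_{W_k}}^\pi g\,d\omega = 0$. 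Combining the two pieces yields $V(\alpha) = \int_\alpha^\pi g\,d\omega$, as claimed; as a consistency check, at $\omega = \pi$ one has $s = i$, so $x = 0$ and $s^2+1 = s^{-2}+1 = 0$, and the integrand vanishes there.

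The main obstacle is the explicit longitude computation in the second step: identifying the correct null-homologous longitude, organizing the matrix product over the highly structured word $w$ by Chebyshev recurrences, and simplifying the resulting rational expression for $L$ down to the stated ratio on the canonical component. Secondary care is needed in fixing the branch of the logarithm and the phase of $L$ so that $2\log|L|$ is genuinely the geodesic length, in justifying $\ell_1 = \ell_2$ rigorously from the component-exchanging symmetry, and in confirming that $z$ is real throughout the spherical interval so that the contribution of $(\alpha_{W_k},\pi)$ to the integral indeed vanishes.
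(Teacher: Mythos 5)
Your proposal is correct and follows essentially the same route as the paper: parametrize the holonomy on the canonical component with meridian eigenvalue $s=e^{i\omega/2}$, apply the Schl\"afli formula with the singular geodesic length $2\log|L|$ computed from the longitude's eigenvalue on that component, and extend the integral from the Euclidean angle to $\pi$ via vanishing on the spherical range. The one practical difference is that the paper does not redo your ``central computation'': the eigenvalue $w_{11}=-s\,\frac{z-(s^{-2}+1)}{z-(s^2+1)}$ (with $s_1=s_2=s$, $|s|=1$) is simply read off from Proposition \ref{canonical}, already established in the proof of Theorem \ref{thm:A-polynomial}, while you spell out the spherical-range vanishing that the paper leaves to its references.
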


Note that the above volume formula for the hyperbolic cone-manifold $E_{W_k}(\alpha)$ depends on the choice of a root $z$ of  $R_{W_k}(e^{i\omega/2},z) =0$. In practice, we choose the root $z$ which gives the maximal volume.

As a direct consequence of Theorem \ref{thm:volume}, we obtain the following. 

\begin{corollary}
The hyperbolic volume of the $r$-fold cyclic covering over the twisted Whitehead link $W_k$, with $r \ge 3$, 
is given by the following formula
$$r \emph{Vol} \, E_{W_k}(\frac{2\pi}{r}) = r \int_{\frac{2\pi}{r}}^{\pi} 2\log \left| \frac{z - (s^{-2}+1)}{z- (s^2+1) } \right| d\omega$$ 
where $s=e^{i\omega/2}$ and $z$ (with $\text{Im} \, z \ge 0$) satisfy $R_{W_k}(s,z) =0$.
\end{corollary}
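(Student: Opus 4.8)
The plan is to identify the $r$-fold cyclic covering over $W_k$ with a genuine $r$-fold orbifold cover of the hyperbolic cone-manifold $E_{W_k}(2\pi/r)$, so that the two hyperbolic volumes differ by exactly the factor $r$, and then to read off the integral by simply evaluating Theorem \ref{thm:volume} at the cone-angle $\alpha = 2\pi/r$. Here ``the $r$-fold cyclic covering over $W_k$'' is understood as the $r$-fold cyclic cover of $S^3$ branched along $W_k$, taken with respect to the epimorphism $H_1(E_{W_k}) = \BZ^2 \to \BZ/r$ sending each meridian $a,b$ to the generator; this is the covering compatible with the single-parameter family $E_{W_k}(\alpha)$ of the paper, in which both cone-angles are equal.

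The geometric heart of the argument is the standard correspondence between cone-manifolds and cyclic branched covers. A hyperbolic cone structure on $S^3$ with cone-angle $2\pi/r$ along $W_k$ is precisely a hyperbolic orbifold whose singular locus is $W_k$ with local isotropy group $\BZ/r$, the rotation by $2\pi/r$ about each meridian. The $r$-fold cyclic branched cover $M_r$ then fits into an orbifold covering $M_r \to E_{W_k}(2\pi/r)$ of degree $r$ which unwraps each cone circle: upstairs the meridional rotation angle $2\pi/r$ becomes $2\pi$, so the branch locus is filled in smoothly and $M_r$ is a \emph{smooth} hyperbolic $3$-manifold. Since hyperbolic volume is multiplicative of degree under orbifold coverings, this yields $\mathrm{Vol}\, M_r = r\,\mathrm{Vol}\, E_{W_k}(2\pi/r)$, and combining with Theorem \ref{thm:volume} at $\alpha = 2\pi/r$ gives the claimed formula with no further computation, the integrand being unchanged.

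Before invoking Theorem \ref{thm:volume} I would verify that it applies at $\alpha = 2\pi/r$. Since $r \ge 3$ we have $2\pi/r \in (0,\tfrac{2\pi}{3}]$, and as $\alpha_{W_k} \in [\tfrac{2\pi}{3},\pi)$ this cone-angle lies in the hyperbolic range $(0,\alpha_{W_k})$ supplied by the trichotomy of \cite{HLM, Ko, Po, PW}, so that $E_{W_k}(2\pi/r)$ indeed carries a genuine hyperbolic cone structure. The one point that must be handled with care is the borderline case $r=3$: there $2\pi/r = \tfrac{2\pi}{3}$, and hyperbolicity requires the strict inequality $\alpha_{W_k} > \tfrac{2\pi}{3}$ rather than equality (otherwise $E_{W_k}(\tfrac{2\pi}{3})$ is the Euclidean threshold manifold). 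This, together with making precise that the smooth hyperbolic structure on $M_r$ is exactly the pullback of the cone structure under the unwrapping map, is the main obstacle; both are standard for cyclic branched covers of hyperbolic two-bridge links, and the remainder of the corollary is a direct consequence of Theorem \ref{thm:volume}.
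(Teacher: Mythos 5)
Your proposal is correct and is exactly the argument the paper intends: the paper states the corollary as a direct consequence of Theorem \ref{thm:volume}, implicitly using the standard identification of the $r$-fold cyclic branched cover with a degree-$r$ orbifold cover of the cone-manifold $E_{W_k}(2\pi/r)$ and multiplicativity of hyperbolic volume, which you spell out. Your additional check that $2\pi/r$ lies in the hyperbolic range $(0,\alpha_{W_k})$, including the borderline case $r=3$ where the strict inequality $\alpha_{W_k}>\tfrac{2\pi}{3}$ is needed, is more careful than the paper's one-line assertion and is the right point to flag.
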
 

\begin{remark}
The link $7_3^2$ is the twisted Whitehead link $W_3$. 
With an appropriate change of variables, we obtain the volume formula for the hyperbolic cone-manifold of $7_3^2$ in \cite[Theorem 1.1]{HLMR-link}  by taking $k=3$ in Theorem \ref{thm:volume}.
\end{remark}

The paper is organized as follows. In Section \ref{sec:prelim} we review the definition of the A-polynomial $m$-tuple of an $m$-component link in $S^3$. In Section \ref{sec:nonab-repn} we review the nonabelian $SL_2(\BC)$-representations of a two-bridge link and compute them explicitly for twisted Whitehead links. We also give proofs of Theorems \ref{thm:A-polynomial}--\ref{thm:volume} in Section \ref{sec:nonab-repn}.

%%%%%%%%%%%%%%%%%%%%%%%%%%%%%%%%%%%%%%%%%%%%%%%%%%%%%%
\section{The A-polynomial} \label{sec:prelim}

\subsubsection{Character varieties}

The set of characters of representations of a finitely generated group $G$ into $SL_2(\BC)$ is known to be a complex algebraic set, called the character variety of $G$ and denoted by $\chi(G)$ (see \cite{CS, LM}). For a manifold $Y$ we also use $\chi(Y)$ to denote $\chi(\pi_1(Y))$. Suppose  $G=\BZ^2$, the free abelian group with 2 generators.
Every pair  of generators $\mu,\lambda$ will define an isomorphism
between $\chi(G)$ and $(\BC^*)^2/\tau$, where $(\BC^*)^2$ is the
set of non-zero complex pairs $(M,L)$ and $\tau: (\BC^*)^2 \to (\BC^*)^2$ is the involution
defined by $\tau(M,L):=(M^{-1},L^{-1})$, as follows. Every representation is
conjugate to an upper diagonal one, with $M$ and $L$ being the
upper left entry of $\mu$ and $\lambda$ respectively. The
isomorphism does not change if we replace $(\mu,\lambda)$ with
$(\mu^{-1},\lambda^{-1})$.

\subsubsection{The A-polynomial}

Suppose $\CL = K_1 \sqcup \dots \sqcup K_m$ be an $m$-component link in $S^3$. 
Let $E_\CL = S^3 \setminus \CL$ be the link exterior and $T_1, \dots, T_m$ 
the boundary tori of $E_\CL$ corresponding to $K_1, \dots, K_m$ respectively. 
Each $T_i$ is a torus whose fundamental group  is free abelian of rank
two. An orientation of $K_i$ will define a unique pair of an
oriented meridian $\mu_i$ and an oriented longitude $\lambda_i$ such that the linking
number between the longitude $\lambda_i$ and the knot $K_i$ is 0. The pair provides
an identification of $\chi(\pi_1(T_i))$ and $(\BC^*)_i^2/\tau_i$, where $(\BC^*)_i^2$ is the
set of non-zero complex pairs $(M_i,L_i)$ and $\tau_i$ is the involution
$\tau(M_i,L_i)=(M_i^{-1},L_i^{-1})$, which actually does not depend on the orientation of $K_i$.

The inclusion $T_i \hookrightarrow E_{\CL}$ induces the restriction
map
$$\rho_i : \chi(E_{\CL}) \longrightarrow \chi(T_i)\equiv (\BC^*)_i^2/\tau_i.$$
For each $\gamma \in \pi_1(E_{\CL})$ let $f_{\gamma}$ be the regular function on the character variety $\chi(E_{\CL})$ defined by $f_{\gamma}(\chi_{\rho})=(\chi_{\rho}(\gamma))^2-4$, where $\chi_\rho$ denotes the character of a representation 
$\rho: \pi_1(E_{\CL}) \to SL_2(\BC)$. Let $\chi_i(X)$ be the subvariety of $\chi(X)$ defined by $f_{\mu_j}=0,~f_{\lambda_j}=0$ for all $j \not= i.$
Let $Z_i$ be the image of $\chi_i(X)$ under 
$\rho_i$ and  $\hat Z_i \subset (\BC^*)_i^2$ the lift of $Z_i$ under the
projection $(\BC^*)_i^2 \to (\BC^*)_i^2/\tau_i$. It is known that the Zariski closure of
$\hat Z_i\subset (\BC^*)_i^2 \subset \BC_i^2$ in $\BC_i^2$ is an algebraic set
consisting of components of dimension 0 or 1 \cite{Zh}. The union of all the
1-dimension components is defined by a single polynomial $A'_i \in
\BZ[M_i,L_i]$ whose coefficients are co-prime. Note that $A'_i$ is defined up to $\pm 1$. 
It is also known that $A'_i$ always contains the factor $L_i-1$ coming from the characters of reducible representations, 
hence $A'_i=(L_i-1)A_i$ for some $A_i \in \BZ[M_i,L_i]$. 
As in \cite{Zh}, we will call $[A_1(M_1, L_1), \cdots, A_m(M_m, L_m)]$ the A-polynomial $m$-tuple of $\CL$. 
For brevity, we also write $A_i(M,L)$ for $A_i(M_i, L_i)$. We refer the reader to \cite{Zh} for properties of the A-polynomial $m$-tuple.

Recall that the Newton polygon of a two-variable polynomial $\sum {a_{ij}}M^i L^j$ is the convex hull in $\BR^2$ of the set
$\{(i,j) : a_{ij} \not= 0\}$. The slope of a side of the Newton polygon is called a boundary
slope of the polygon. The following proposition is useful for determining canonical components of hyperbolic links.

\begin{proposition} \label{prop:hyp}
\cite[Theorem 3(2)]{Zh}
Suppose $\CL \subset S^3$ is a hyperbolic $m$-component link. 
For each $j = 1, \cdots, m$, the factor $A_j^{\emph{can}}(M,L)$ of the A-polynomial $A_j(M,L)$ corresponding to a canonical component 
has an irreducible factor whose Newton polygon has at least two distinct boundary slopes. In particular, this irreducible factor contains at least 3 monomials in $M,L$.
\end{proposition}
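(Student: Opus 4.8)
The plan is to realise $A_j^{\mathrm{can}}$ as the defining polynomial of a plane curve extracted from the geometric component of $\chi(E_\CL)$ and then to show that one of its irreducible factors has a two-dimensional Newton polygon. Since, as defined above, a boundary slope of the polygon is the slope of one of its sides, this is exactly the assertion that the factor has at least two distinct boundary slopes, and a two-dimensional convex polygon automatically has at least three vertices, giving at least three monomials.

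First I would fix the canonical component $X_0 \subset \chi(E_\CL)$, i.e.\ the component containing the character $\chi_0$ of a lift to $SL_2(\BC)$ of the holonomy representation $\rho_{\mathrm{hol}}$. By Thurston's deformation theory \cite{Th} we have $\dim_\BC X_0 = m$, and deformability of the complete structure shows in addition that the meridian eigenvalue $M$ is non-constant on $X_0$. At the complete structure every cusp is parabolic, so all $\mu_i,\lambda_i$ have trace $\pm2$ and hence $f_{\mu_i}=f_{\lambda_i}=0$ for every $i$; thus $\chi_0 \in \chi_j(X)$, and intersecting $X_0$ with $\chi_j(X)$ (deforming only the $j$-th cusp while keeping the others parabolic) cuts it down to a curve $C$ through $\chi_0$ on which $M$ is still non-constant. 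By construction $A_j^{\mathrm{can}}$ vanishes on the Zariski closure $\hat D_j \subset (\BC^*)^2$ of $\rho_j(C)$, which contains the image $(\varepsilon_1,\varepsilon_2)$ of $\chi_0$ with $\varepsilon_1,\varepsilon_2 \in \{\pm1\}$. I would then let $f$ be the irreducible factor of $A_j^{\mathrm{can}}$ vanishing on the irreducible component $D^0$ of $\hat D_j$ through $(\varepsilon_1,\varepsilon_2)$.

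The key analytic input is the Neumann--Zagier relation at the complete structure: writing $u=2\log M$ and $v=2\log L$ for the logarithmic holonomies of $\mu_j$ and $\lambda_j$ along $C$, one has $v = \tau u + O(u^3)$ near $\chi_0$, where $\tau$ is the modulus of the $j$-th Euclidean cusp cross-section. Because that cross-section is a genuine flat torus, $\tau$ is finite, non-zero and non-real, so the slope $\dfrac{d\log L}{d\log M}=\tau$ is finite and non-real at $(\varepsilon_1,\varepsilon_2)$.

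The crux, and the step I expect to be the main obstacle, is to rule out that the Newton polygon $N$ of $f$ is a single segment. If it were, with primitive integral edge direction $(a,b)$, then $f = M^{i_0}L^{j_0}\,p(M^aL^b)$ for a one-variable polynomial $p$, so the monomial $M^aL^b$ would assume only finitely many values on the irreducible curve $D^0$ and hence be constant there. Differentiating along $D^0$ gives $a\,d\log M + b\,d\log L \equiv 0$. If $b=0$ this forces $M$ to be constant on $C$, contradicting the non-constancy above; if $b\neq0$ it gives $\dfrac{d\log L}{d\log M}\equiv-\dfrac{a}{b}\in\BQ$, which evaluated at $(\varepsilon_1,\varepsilon_2)$ contradicts the non-reality of $\tau$. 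Therefore $N$ is two-dimensional, so it has at least two sides of distinct slope and at least three vertices; equivalently $f$ has at least two distinct boundary slopes and contains at least three monomials in $M,L$, as claimed. The remaining routine points — that $C$ is genuinely one-dimensional and that the chosen $f$ indeed divides $A_j^{\mathrm{can}}$ — I would dispatch directly.
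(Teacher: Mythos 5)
The paper does not actually prove this proposition: it is imported verbatim from Zhang \cite[Theorem 3(2)]{Zh}, so there is no internal proof to compare against. Your argument is a correct reconstruction of the standard proof (essentially Zhang's): Thurston's deformation theory gives $\dim X_0 = m$ and a curve $C \subset X_0 \cap \chi_j(X)$ on which only the $j$-th cusp is deformed; the Neumann--Zagier relation identifies the limiting slope $d\log L/d\log M$ at the complete structure with the cusp modulus $\tau \notin \mathbb{R}$; and the collinearity argument (a one-dimensional Newton polygon forces $f = M^{i_0}L^{j_0}p(M^aL^b)$, hence $M^aL^b$ constant on the irreducible curve $D^0$, hence a rational or infinite slope at $(\varepsilon_1,\varepsilon_2)$) rules out a segment. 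Three minor points deserve a sentence each in a polished write-up. First, the degenerate case where the polygon is a single point: then $f$ is a monomial, which has empty zero set in $(\mathbb{C}^*)^2$ yet must vanish on $D^0$, so this case is vacuous (your segment dichotomy does not literally cover it). Second, since by definition $A_j$ is obtained from $A_j'$ by removing the factor $L-1$, you need $D^0 \not\subset \{L=1\}$; your slope argument gives this for free ($L \equiv 1$ would force slope $0$, contradicting $\tau \notin \mathbb{R}$), but it is worth saying. Third, the dimension count when intersecting $X_0$ with $\chi_j(X)$: naively you impose $2(m-1)$ equations on an $m$-dimensional variety, so one should note (as you implicitly do) that in Thurston's local parametrization of $X_0$ by the meridian holonomies $u_1,\dots,u_m$ the conditions $f_{\lambda_i}=0$ are automatic near $\chi_0$ once $u_i=0$, so the intersection really is a curve through $\chi_0$ on which $M$ is non-constant.
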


%%%%%%%%%%%%%%%%%%%%%%%%%%%%%%%%%%%%%%%%%%%%%%%%%%%%%%
\section{Proofs of Theorems \ref{thm:A-polynomial}--\ref{thm:volume}} \label{sec:nonab-repn}

In this section we review the nonabelian $SL_2(\BC)$-representations of a two-bridge link from \cite{Ri} and compute them explicitly for twisted Whitehead links. 
Finally, we give proofs of Theorems \ref{thm:A-polynomial}--\ref{thm:volume}.

\subsection{Two-bridge links} 

Two-bridge links are those links admitting a projection with only two maxima and two minima. 
The double branched cover of $S^3$ along a two-bridge link is a lens space $L(2p,q)$, 
which is obtained by doing a $2p/q$ surgery on the unknot. 
Such a two-bridge link is denoted by $\fb(2p,q)$. 
Here $q$ is an odd integer co-prime with $2p$ and $2p > |q| \ge 1$. 
It is known that $\fb(2p',q')$ is ambient isotopic to $\fb(2p,q)$ 
if and only if $p'=p$ and $q' \equiv q^{\pm 1} \pmod{2p}$, see e.g. \cite{BuZ}. 
The link group of the two-bridge link $\fb(2p,q)$ has a standard 
two-generator presentation $\pi_1(E_{\fb(2p,q)}) = \la a, b \mid wa = a w \ra$ 
where $a,b$ are meridians, $w=b^{\ve_1} a^{\ve_2} \cdots a^{\ve_{p-2}}b^{\ve_{2p-1}}$ and 
$\ve_i=(-1)^{\lfloor iq/(2p) \rfloor}$ for $1 \le i \le 2p-1$. Note that $\ve_i = \ve_{2p-i}$.

We now study representations of link groups into $SL_2(\BC)$. 
A representation is called nonabelian if its image is a nonabelian subgroup of $SL_2(\BC)$. 
Let $\CL = \fb(2p,q)$. 
Suppose $\rho: E_\CL \to SL_2(\C)$ is a nonabelian representation. 
Up to conjugation, we may assume that $$\rho(a) = \left[ \begin{array}{cc}
s_1 & 1 \\
0 & s_1^{-1} \end{array} \right] \quad \text{and} \quad 
\rho(b) = \left[ \begin{array}{cc}
s_2 & 0 \\
u & s_2^{-1} \end{array} \right]$$
where $(u, s_1, s_2) \in \C^3$ satisfies the matrix equation $\rho(aw) = \rho(wa)$. 

For any word $r$, we write $\rho(r) = \left[ \begin{array}{cc}
r_{11} & r_{12} \\
r_{21} & r_{22} \end{array} \right]$. 
By induction on the word length, we can show that 
$r_{21}$ is a multiple of $u$ in $\C[u, s_1^{\pm 1}, s_2^{\pm 1}]$. 
Hence we can write $r_{21}= u r'_{21}$ for some $r'_{21} \in \C[u, s_1^{\pm 1}, s_2^{\pm 1}]$. 
A word is said to be \textit{palindromic} if it reads the same backward or forward. 
By Lemma 1 in \cite{Ri} we have $$r_{22} - r_{11} + (s_1 - s_1^{-1})r_{12} = (s_2 - s_2^{-1}) r'_{21}$$
for any palindromic word $r$ of odd length.

The matrix equation $\rho(aw) = \rho(wa)$ is easily seen to be equivalent to the two equations 
$uw'_{21} =0$ and $w_{22} - w_{11} + (s_1 - s_1^{-1})w_{12} = (s_2 - s_2^{-1}) w'_{21}$.
Since $w$ is a palindromic word of odd length, we conclude that the matrix equation 
$\rho(aw) = \rho(wa)$ is equivalent to a single equation $w'_{21} = 0$. 
The polynomial $w'_{21}$ is called the Riley polynomial of a two-bridge link, and 
it determines the nonabelian representations of the link.

\begin{remark}
Other approaches, using character varieties and skein modules, to the Riley polynomial 
of a two-bridge link can be found in \cite{Qa, Tran-twobridgelink, LT}. 
\end{remark}

\subsection{Chebyshev polynomials} 

Recall that the $S_k(v)$ are the Chebychev polynomials of the second kind defined by 
$S_0(v)=1$, $S_1(v)=v$ and $S_{k}(v) = v S_{k-1}(v) - S_{k-2}(v)$ 
for all integers $k$. The following results are elementary, see e.g. \cite{Tr-volume}.

\begin{lemma} \label{chev1}
For any integer $k$ we have
$$S^2_k(v) + S^2_{k-1}(v) - v S_k(v) S_{k-1}(v) =1.$$
\end{lemma}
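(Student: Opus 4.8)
The plan is to prove the identity by induction on $k$, showing that the left-hand side is independent of $k$ and then pinning down its constant value by a single base case. First I would set
$$f(k) = S_k^2(v) + S_{k-1}^2(v) - v\,S_k(v)\,S_{k-1}(v),$$
and observe that, since the defining recurrence $S_k(v) = v\,S_{k-1}(v) - S_{k-2}(v)$ holds for all integers $k$, it suffices to establish two facts: that $f(k) = 1$ for one specific value of $k$, and that $f(k) = f(k-1)$ for every integer $k$. Together these propagate the base value both upward and downward and yield $f(k) = 1$ for all integers $k$. For the base case I would take $k = 0$, where $S_0(v) = 1$ and $S_{-1}(v) = 0$ give $f(0) = 1 + 0 - 0 = 1$ at once (the choice $k = 1$ works equally well, using $S_1(v) = v$).

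The heart of the argument is the constancy step $f(k) = f(k-1)$, which reduces to a short algebraic manipulation. Computing the difference, cancelling the common $S_{k-1}^2(v)$ term, and grouping so as to extract the common factor $S_k(v) - S_{k-2}(v)$, I would arrive at the factorization
$$f(k) - f(k-1) = \bigl(S_k(v) - S_{k-2}(v)\bigr)\bigl(S_k(v) + S_{k-2}(v) - v\,S_{k-1}(v)\bigr).$$
Rewriting the recurrence in the symmetric form $S_k(v) + S_{k-2}(v) = v\,S_{k-1}(v)$ shows that the second factor vanishes identically, so $f(k) - f(k-1) = 0$ as desired.

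I do not anticipate a genuine obstacle: the computation is purely formal. The only point deserving care is that the recurrence, and hence the definition of $S_k(v)$, is taken to hold for all integers $k$ including negative indices; this is what allows the induction to run in both directions from the base case, so that the asserted identity holds for every integer $k$ rather than only for $k \ge 1$.
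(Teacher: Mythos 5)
Your proof is correct and complete. Note that the paper itself offers no proof of this lemma: it simply labels the identity elementary and cites \cite{Tr-volume}, so your argument fills a gap rather than replicating anything. Your two-sided induction is sound: the factorization
$$f(k) - f(k-1) = \bigl(S_k(v) - S_{k-2}(v)\bigr)\bigl(S_k(v) + S_{k-2}(v) - v\,S_{k-1}(v)\bigr) = 0$$
checks out, the base case $f(0) = 1 + 0 - 0 = 1$ is right, and you correctly flag the one delicate point, namely that the recurrence holds for all integers $k$, so the constancy propagates in both directions. For comparison, there is an even shorter route available inside the paper: take determinants in Lemma \ref{chev2}. Since $\det V = 1$ and $\tr V = v$, one has $\det(\mu \mathbf{1} - V) = \mu^2 - v\mu + 1$, hence
$$1 = \det V^k = \det\bigl(S_{k-1}(v)\,V - S_{k-2}(v)\,\mathbf{1}\bigr) = S_{k-1}^2(v) + S_{k-2}^2(v) - v\,S_{k-1}(v)\,S_{k-2}(v),$$
which is the identity with $k$ shifted by one, valid for all integers $k$ since Lemma \ref{chev2} is. Your approach is purely formal on the recurrence and needs no matrix input; the determinant route is one line and explains conceptually why the quadratic form $S_k^2 + S_{k-1}^2 - v S_k S_{k-1}$ is the natural invariant here. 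Either is acceptable.
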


\begin{lemma} \label{chev2}
Suppose $V \in SL_2(\BC)$ and $v=\tr V$. For any integer $k$ we have
$$V^k = S_{k-1}(v) V - S_{k-2}(v) \mathbf{1}$$
where $\mathbf{1}$ denotes the $2 \times 2$ identity matrix.
\end{lemma}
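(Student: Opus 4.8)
The plan is to prove the identity by induction on $k$, using the Cayley--Hamilton theorem as the single structural input. Since $V \in SL_2(\BC)$ has $\det V = 1$ and $\tr V = v$, its characteristic polynomial is $\lambda^2 - v\lambda + 1$, so Cayley--Hamilton yields the quadratic relation
$$V^2 = vV - \mathbf{1}.$$
This is precisely the defining three-term recurrence of the $S_k(v)$ lifted to the level of matrices, which is exactly what makes the Chebyshev polynomials appear in the powers of $V$.

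First I would verify the base cases directly from the conventions recorded just above the lemma. For $k=1$ we have $S_0(v)=1$ and $S_{-1}(v)=0$, so the right-hand side is $1\cdot V - 0\cdot \mathbf{1} = V = V^1$. For $k=0$ I would use $S_{-1}(v)=0$ together with $S_{-2}(v)=-S_0(v)=-1$ (from the stated rule $S_k=-S_{-k-2}$), giving $0\cdot V - (-1)\mathbf{1} = \mathbf{1} = V^0$. These anchor the induction on the low end.

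For the inductive step upward, assuming the formula holds for $k$, I would multiply by $V$ and substitute the quadratic relation:
$$V^{k+1} = S_{k-1}(v)V^2 - S_{k-2}(v)V = S_{k-1}(v)\bigl(vV - \mathbf{1}\bigr) - S_{k-2}(v)V = \bigl(vS_{k-1}(v) - S_{k-2}(v)\bigr)V - S_{k-1}(v)\mathbf{1},$$
and the coefficient of $V$ collapses to $S_k(v)$ by the defining recurrence, producing the formula for $k+1$. To reach negative $k$ I would run the analogous downward step: since $V$ is invertible with $V^{-1} = v\mathbf{1} - V$ (multiply the quadratic relation by $V^{-1}$), the recurrence $S_k = vS_{k-1}-S_{k-2}$, which holds for all integers by hypothesis, lets me decrement $k$ in exactly the same manner; one checks the formula at $k=-1$ as $V^{-1}=S_{-2}(v)V-S_{-3}(v)\mathbf{1}=-V+v\mathbf{1}$, matching.

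The computation is entirely routine and I do not anticipate a serious obstacle. The only point requiring care is the bookkeeping at negative indices: I must confirm that the stated conventions $S_{-1}(v)=0$, $S_{-2}(v)=-1$, and the symmetry $S_k=-S_{-k-2}$ are mutually consistent with the three-term recurrence, so that a single induction argument extends the identity over all of $\BZ$ rather than merely over $k\ge 1$.
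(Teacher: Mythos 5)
Your proof is correct and complete: the Cayley--Hamilton relation $V^2 = vV - \mathbf{1}$ plus a two-sided induction (using $V^{-1} = v\mathbf{1} - V$ to descend to negative $k$) gives the identity for all $k \in \BZ$, and your bookkeeping at negative indices ($S_{-1}(v)=0$, $S_{-2}(v)=-1$, $S_k = -S_{-k-2}$) is consistent with the recurrence, as you checked. The paper offers no proof of this lemma, simply calling it elementary and citing \cite{Tr-volume}; your argument is precisely the standard one intended there, so there is nothing to reconcile.
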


We will need the following lemma in the proof of Theorem \ref{thm:A-polynomial}.

\begin{lemma} \label{expansion}
Suppose $v=2+q$. For $k \ge 0$ we have 
\begin{equation} \label{ind}
S_k(v) = \sum_{i=0}^k \binom{k+1+i}{2i+1} q^i.
\end{equation}
\end{lemma}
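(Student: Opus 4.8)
**The plan is to prove the identity
\begin{equation}\label{claim}
S_k(v) = \sum_{i=0}^k \binom{k+1+i}{2i+1} q^i, \qquad v = 2+q,
\end{equation}
by induction on $k$**, using the defining recurrence $S_k = v S_{k-1} - S_{k-2}$. First I would check the base cases $k=0$ and $k=1$ directly: for $k=0$ the right-hand side is $\binom{1}{1} q^0 = 1 = S_0(v)$, and for $k=1$ it is $\binom{2}{1} q^0 + \binom{3}{3} q^1 = 2 + q = v = S_1(v)$, both of which match.

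For the inductive step, I would assume \eqref{claim} holds for $k-1$ and $k-2$ and compute $v S_{k-1}(v) - S_{k-2}(v)$. Writing $v = 2+q$, the term $v S_{k-1}(v)$ splits as $2 S_{k-1}(v) + q S_{k-1}(v)$, so the whole expression becomes a single sum in powers of $q$ whose coefficient of $q^i$ is
\begin{equation*}
2\binom{k+i}{2i+1} + \binom{k+i-1}{2i-1} - \binom{k-1+i}{2i+1},
\end{equation*}
where the middle term arises from multiplying the $q^{i-1}$-coefficient of $S_{k-1}$ by $q$. The goal is to show this equals $\binom{k+1+i}{2i+1}$ for every $i$. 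This reduces the problem to a purely binomial identity, and I would prove it by repeated application of Pascal's rule $\binom{a}{b} = \binom{a-1}{b} + \binom{a-1}{b-1}$. Concretely, $\binom{k+1+i}{2i+1} = \binom{k+i}{2i+1} + \binom{k+i}{2i}$, and a second application of Pascal's rule to $\binom{k+i}{2i}$ should produce the terms $\binom{k+i-1}{2i-1}$ and $\binom{k+i-1}{2i}$; the remaining bookkeeping is matching $\binom{k+i}{2i+1} - \binom{k-1+i}{2i+1}$ against $\binom{k+i-1}{2i}$, which is again Pascal's rule.

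The main obstacle I anticipate is purely a matter of careful index handling rather than any conceptual difficulty: I must track the shifts in the binomial arguments correctly when multiplying the sum for $S_{k-1}(v)$ by $q$ (which reindexes $i \mapsto i+1$), and I must verify that the boundary terms at $i=0$ and at $i=k$ behave correctly, since the summation ranges for $S_{k-1}$ and $S_{k-2}$ differ from that of $S_k$. In particular, at $i=k$ the terms coming from $S_{k-2}(v)$ and the ``$2 S_{k-1}$'' part vanish (their sums stop earlier), and I should confirm that only the $q\,S_{k-1}(v)$ contribution survives and gives the correct leading coefficient $\binom{2k+1}{2k+1}=1$, matching the $q^k$-coefficient of $S_k(v)$. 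Once the coefficient identity is reduced to two or three applications of Pascal's rule, the induction closes and \eqref{ind} follows.
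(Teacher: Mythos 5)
Your proposal is correct and follows essentially the same route as the paper's own proof: induction on $k$ via the recurrence $S_k = vS_{k-1} - S_{k-2}$, reduction to the coefficient identity $\binom{k+1+i}{2i+1} = 2\binom{k+i}{2i+1} + \binom{k+i-1}{2i-1} - \binom{k-1+i}{2i+1}$, and closing it with three applications of Pascal's rule $\binom{c}{d} + \binom{c}{d+1} = \binom{c+1}{d+1}$. Your extra attention to the boundary terms at $i=0$ and $i=k$ is sound and slightly more careful than the paper, which absorbs them silently into the unified sum.
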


\begin{proof}
We use induction on $k \ge 0$. The cases $k=0,1$ are clear. 
Suppose $k \ge 2$ and \eqref{ind} holds true for $k-2$ and $k-1$. 
Since $S_k(v) = vS_{k-1}(v) - S_{k-2}(v)$, we have 
\begin{eqnarray*}
S_k(v) &=& (2+q) \sum_{i=0}^{k-1} \binom{k+i}{2i+1} q^i - \sum_{i=0}^{k-2} \binom{k-1+i}{2i+1} q^i \\
       &=& \sum_{i=0}^k \left\{ 2 \binom{k+i}{2i+1} + \binom{k+i-1}{2i-1} - \binom{k-1+i}{2i+1} \right\} q^i.
\end{eqnarray*}

It remains to show the following identity
$$\binom{k+1+i}{2i+1} = 2 \binom{k+i}{2i+1} + \binom{k+i-1}{2i-1} - \binom{k-1+i}{2i+1}.$$
This follows by applying the equality $\binom{c}{d} + \binom{c}{d+1} = \binom{c+1}{d+1}$ three times.
\end{proof} 

\subsection{Twisted Whitehead links}

In this subsection we compute nonabelian representations 
of twisted Whitehead links explicitly. 

We first consider the case of $W_{2n-1}$. Recall that 
the link group of $W_{2n-1}$ has a presentation $\pi_1(E_{W_{2n-1}}) = \la a, b \mid aw = wa \ra$, 
where $a,b$ are meridians depicted in Figure \ref{Wh} and 
$w=(bab^{-1}a^{-1})^na(a^{-1}b^{-1}ab)^n$. 

Suppose $\rho: \pi_1(E_{W_{2n-1}}) \to SL_2(\C)$ is a nonabelian representation. 
Up to conjugation, we may assume that $$\rho(a) = \left[ \begin{array}{cc}
s_1 & 1 \\
0 & s_1^{-1} \end{array} \right] \quad \text{and} \quad 
\rho(b) = \left[ \begin{array}{cc}
s_2 & 0 \\
u & s_2^{-1} \end{array} \right]$$
where $(u, s_1, s_2) \in \C^3$ satisfies the matrix equation $\rho(aw) = \rho(wa)$. 

Recall from the Introduction that $x = \tr \rho(a) = s_1 + s_1^{-1}$, $y = \tr \rho(b) = s_2 + s_2^{-1}$, 
$z = \tr \rho(ab) = u + s_1 s_2 + s_1^{-1} s_2^{-1}$ and 
$$v = \tr \rho(bab^{-1}a^{-1}) = u (u + s_1 s_2 + s_1^{-1} s_2^{-1} - s_1 s_2^{-1} - s_1^{-1} s_2) + 2.$$ 

Let $c= bab^{-1}a^{-1}$ and $d = a^{-1}b^{-1}ab$. We have 
$$\rho(c) =  \left[ \begin{array}{cc}
c_{11} & c_{12} \\
c_{21} & c_{22} \end{array} \right]
\qquad \text{and} \qquad 
\rho(d) = \left[ \begin{array}{cc}
d_{11} & d_{12} \\
d_{21} & d_{22} \end{array} \right]$$ 
where
\begin{eqnarray*}
c_{11} &=& 1 - s_1^{-1} s_2 u, \\
c_{12} &=& -s_1 + s_1 s_2^2 + s_2 u, \\
c_{21} &=& u (-s_1^{-2} s_2^{-1} + s_2^{-1} - s_1^{-1} u), \\
c_{22} &=& 1 + (s_1^{-1} s_2^{-1} - s_1 s_2^{-1} + s_1 s_2)u + u^2, \\
d_{11} &=& 1 + (s_1^{-1} s_2^{-1} - s_1^{-1} s_2 + s_1 s_2)u + u^2, \\
d_{12} &=& s_1^{-1} s_2^{-2} - s_1^{-1} + s_2^{-1} u, \\
d_{21} &=& u (s_2 - s_1^2 s_2 - s_1 u), \\
d_{22} &=& 1 - s_1 s_2^{-1} u.
\end{eqnarray*}

Since $v = \tr \rho(c) = \tr \rho(d)$, by Lemma \ref{chev2} we have
\begin{eqnarray*}
\rho(c^n) &=& \left[ \begin{array}{cc}
c_{11} S_{n-1}(v) -  S_{n-2}(v) & c_{12} S_{n-1}(v) \\
c_{21} S_{n-1}(v) & c_{22} S_{n-1}(v) -  S_{n-2}(v) \end{array} \right],\\ 
\rho(d^n) &=& \left[ \begin{array}{cc}
 d_{11}S_{n-1}(v) -  S_{n-2}(v) & d_{12} S_{n-1}(v) \\
d_{21} S_{n-1}(v) & d_{22} S_{n-1}(v) - S_{n-2}(v) \end{array} \right].
\end{eqnarray*}
This implies that $\rho(w) = \left[ \begin{array}{cc}
w_{11} & * \\
w_{21} & * \end{array} \right] = \rho(c^n a d^n)$ with 
\begin{eqnarray*}
w_{11} &=& (c_{12} d_{21} s_1^{-1} + c_{11} d_{21} + c_{11} d_{11} s_1) S^2_{n-1}(v) +  s_1 S^2_{n-2}(v) \\
       && \quad - \, (d_{21} + c_{11} s_1 + d_{11} s_1) S_{n-1}(v) S_{n-2}(v), \\
w_{21} &=& \big[ (c_{22} d_{21} s_1^{-1} + c_{21} d_{21} + c_{21} d_{11} s_1) S_{n-1}(v) 
- (d_{21} s_1^{-1} + c_{21} s_1) S_{n-2}(v)\big] S_{n-1}(v).
\end{eqnarray*}

By direct calculations we have
\begin{eqnarray*}
c_{22} d_{21} s_1^{-1} + c_{21} d_{21} + c_{21} d_{11} s_1   
      &=& u (xy - v z),\\
d_{21} s_1^{-1} + c_{21} s_1 
                             &=& u (xy - 2z).
\end{eqnarray*}
Hence $w_{21} = u w'_{21}$ where 
$$w'_{21}(W_{2n-1}) = \big( (xy - v z) S_{n-1}(v) - (xy - 2z)S_{n-2}(v) \big) S_{n-1}(v),$$
which is the Riley polynomial of $W_{2n-1}$.

Similarly, the Riley polynomial of $W_{2n}$ is given by the following formula
$$w'_{21}(W_{2n}) = \big( z S_{n}(v) - (xy -  z) S_{n-1}(v) \big) \big( S_{n}(v) - S_{n-1}(v) \big).$$ 

\begin{remark}
The above formulas for the Riley polynomials of twisted Whitehead links 
were already obtained in \cite{Tran-character} using character varieties. 
It was also shown in \cite{Tran-character} that the Riley polynomial
$w'_{21}(W_{2n-1}) \in \BC[x,y,z]$ is factored into exactly 
$n$ irreducible factors $(xy - v z) S_{n-1}(v) - (xy - 2z)S_{n-2}(v)$ and $v-2\cos \frac{j\pi}{n}$ $(1 \le j \le n-1)$. 
Similarly, $w'_{21}(W_{2n}) \in \BC[x,y,z]$ is factored into exactly 
$n+1$ irreducible factors $z S_{n}(v) - (xy -  z) S_{n-1}(v)$ and $v-2\cos \frac{(2j-1)\pi}{2n+1}$ $(1 \le j \le n)$. 
Note that $$S_{n-1}(v) = \prod_{j=1}^{n-1} \big( v-2\cos \frac{j\pi}{n} \big) \quad \text{and} \quad 
S_{n}(v) - S_{n-1}(v) = \prod_{j=1}^{n} \big( v-2\cos \frac{(2j-1)\pi}{2n+1} \big).$$
\end{remark}

\subsection{Proof of Theorem \ref{thm:A-polynomial}} 

We first consider the case of $W_{2n-1}$. 
The canonical longitudes corresponding to the meridians $a$ and $b$ are respectively
$\lambda_a = wa^{-1}$ and $\lambda_b = \overline{w} b^{-1}$, 
where $\overline{w}$ is the word obtained from $w$ by exchanging $a$ and $b$. 
Precisely, we have $\overline{w} = (aba^{-1}b^{-1})^nb(b^{-1}a^{-1}ba)^n$. 

Suppose $\rho: \pi_1(E_{W_{2n-1}}) \to SL_2(\C)$ is a nonabelian representation.  
With the same notations as in the previous subsection, we have 
$$\rho(w) = \left[ \begin{array}{cc}
w_{11} & * \\
0 & (w_{11})^{-1} \end{array} \right] \quad \text{and} \quad
\rho(\overline{w}) = \left[ \begin{array}{cc}
\overline{w}_{11} & 0 \\
* & (\overline{w}_{11})^{-1} \end{array} \right].$$

\begin{proposition} \label{components}
On $v-2\cos \frac{j\pi}{n} = 0$ we have
$$w_{11} = s_1 \quad \text{and} \quad \overline{w}_{11} = s_2.$$
\end{proposition}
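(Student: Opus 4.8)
The plan is to read off both equalities directly from the explicit formula for $w_{11}$ already established, using only the two Chebyshev facts in Lemmas \ref{chev1} and \ref{chev2}. First I would translate the hypothesis: by the product formula $S_{n-1}(v) = \prod_{j=1}^{n-1}(v - 2\cos\frac{j\pi}{n})$, the condition $v - 2\cos\frac{j\pi}{n} = 0$ for some $j$ with $1 \le j \le n-1$ is equivalent to $S_{n-1}(v) = 0$. So throughout I may assume $S_{n-1}(v) = 0$.

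For the first equality I would substitute $S_{n-1}(v) = 0$ into the displayed expression for $w_{11}$. The $S_{n-1}^2(v)$ term and the $S_{n-1}(v)S_{n-2}(v)$ cross term both vanish, leaving only the middle term, so $w_{11} = s_1 S_{n-2}^2(v)$. To evaluate $S_{n-2}^2(v)$ I would invoke Lemma \ref{chev1} with $k = n-1$: the identity $S_{n-1}^2(v) + S_{n-2}^2(v) - vS_{n-1}(v)S_{n-2}(v) = 1$ collapses to $S_{n-2}^2(v) = 1$ on the locus $S_{n-1}(v)=0$. Hence $w_{11} = s_1$.

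For the second equality I would rerun the same computation for $\overline{w} = (aba^{-1}b^{-1})^n\, b\, (b^{-1}a^{-1}ba)^n = \overline{c}^{\,n} b\, \overline{d}^{\,n}$, where $\overline{c} = aba^{-1}b^{-1} = c^{-1}$ and $\overline{d} = b^{-1}a^{-1}ba = d^{-1}$. Since trace is invariant under inversion in $SL_2(\C)$, both $\overline{c}$ and $\overline{d}$ have trace $v$, so Lemma \ref{chev2} applies verbatim and gives
\[
\rho(\overline{w}) = \big[S_{n-1}(v)\rho(\overline{c}) - S_{n-2}(v)\mathbf{1}\big]\,\rho(b)\,\big[S_{n-1}(v)\rho(\overline{d}) - S_{n-2}(v)\mathbf{1}\big].
\]
On $S_{n-1}(v)=0$ every term carrying a factor of $S_{n-1}(v)$ vanishes, so the surviving contribution is the $(1,1)$ entry of $S_{n-2}^2(v)\rho(b)$, namely $s_2 S_{n-2}^2(v) = s_2$.

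Since the explicit entries of $\rho(c),\rho(d)$ and the formula for $w_{11}$ are already in hand, I expect no genuine obstacle; the computation is forced. The one point demanding care is the bookkeeping for $\overline{w}_{11}$: the central letter of $\overline{w}$ is $b$ rather than $a$, so the surviving diagonal entry is $s_2$ and not $s_1$. This \emph{asymmetry in the central letter} is precisely what produces the two different conclusions $w_{11}=s_1$ and $\overline{w}_{11}=s_2$.
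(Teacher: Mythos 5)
Your proof is correct and follows essentially the same route as the paper: substitute $S_{n-1}(v)=0$ into the explicit formula for $w_{11}$ and use $S_{n-2}^2(v)=1$ (the paper states $S_{n-2}(v)=\pm1$, which is the same consequence of Lemma \ref{chev1}). Your handling of $\overline{w}_{11}$ via $\overline{c}=c^{-1}$, $\overline{d}=d^{-1}$ and Lemma \ref{chev2} is a tidy way to make precise the paper's ``the proof for $\overline{w}_{11}=s_2$ is similar,'' and it matches how the paper derived $w_{11}$ in the first place.
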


\begin{proof}
Recall from the previous subsection that 
\begin{eqnarray*}
w_{11} &=& (c_{12} d_{21} s_1^{-1} + c_{11} d_{21} + c_{11} d_{11} s_1) S^2_{n-1}(v) +  s_1 S^2_{n-2}(v) \\
       && \quad - \, (d_{21} + c_{11} s_1 + d_{11} s_1) S_{n-1}(v) S_{n-2}(v).
\end{eqnarray*}
If $v=2\cos \frac{j\pi}{n}$ then $S_{n-1}(v) = 0$ then $S_{n-2}(v) = \pm 1$. Hence $w_{11} = s_1$.

The proof for $\overline{w}_{11} = s_2$ is similar.
\end{proof}

\begin{proposition} \label{canonical}
On $(xy - v z) S_{n-1}(v) - (xy - 2z)S_{n-2}(v)=0$ we have 
$$ 
w_{11} = \frac{-1-s_2^2+s_1 s_2 z}{s_1 + s_1 s_2^2-s_2 z} \quad \text{and} \quad 
\overline{w}_{11} = \frac{-1-s_1^2+s_1 s_2 z}{s_2 + s_1^2 s_2-s_1 z}.
$$
\end{proposition}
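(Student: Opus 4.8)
The plan is to treat the explicit expression for $w_{11}$ as a quadratic form in the pair $(P,Q) := (S_{n-1}(v), S_{n-2}(v))$ and to collapse it using the defining equation of the locus together with the Chebyshev identity of Lemma \ref{chev1}. Set $A = c_{12}d_{21}s_1^{-1}+c_{11}d_{21}+c_{11}d_{11}s_1$ and $B = d_{21}+s_1(c_{11}+d_{11})$, so that $w_{11} = A\,P^2 - B\,PQ + s_1 Q^2$, and recall the two quantities $A_2 := c_{22}d_{21}s_1^{-1}+c_{21}d_{21}+c_{21}d_{11}s_1 = u(xy-vz)$ and $B_2 := d_{21}s_1^{-1}+c_{21}s_1 = u(xy-2z)$ already computed in the previous subsection. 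In these terms the defining equation of the locus is exactly $A_2 P - B_2 Q = 0$, i.e. $(P:Q)=(B_2:A_2)$.

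First I would use that $P^2 + Q^2 - vPQ = 1$ identically to write
\[
w_{11} = \frac{A\,P^2 - B\,PQ + s_1 Q^2}{P^2 + Q^2 - vPQ},
\]
a quotient of two forms homogeneous of degree two in $(P,Q)$; hence $w_{11}$ depends only on the projective class $(P:Q)$. Substituting $(P,Q)=(B_2,A_2)$ then gives
\[
w_{11} = \frac{A B_2^2 - B A_2 B_2 + s_1 A_2^2}{A_2^2 + B_2^2 - v A_2 B_2}.
\]

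Next I would simplify the target itself. Using $z = u + s_1 s_2 + s_1^{-1}s_2^{-1}$ one checks directly that the denominator $s_1 + s_1 s_2^2 - s_2 z$ equals $s_1 - s_1^{-1} - s_2 u = s_1 s_2 u^{-1} c_{21}$, while the numerator $-1 - s_2^2 + s_1 s_2 z$ equals $-s_2 u^{-1} d_{21}$; thus the asserted value is nothing but $w_{11} = -d_{21}/(s_1 c_{21})$. The proposition for $w_{11}$ therefore reduces to the single polynomial identity
\[
s_1 c_{21}\bigl(A B_2^2 - B A_2 B_2 + s_1 A_2^2\bigr) + d_{21}\bigl(A_2^2 + B_2^2 - v A_2 B_2\bigr) = 0,
\]
understood in $\C[u, s_1^{\pm 1}, s_2^{\pm 1}]$ after substituting $v = u(z - s_1 s_2^{-1} - s_1^{-1}s_2)+2$. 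I would verify this by inserting the explicit entries $c_{ij}, d_{ij}$, expanding, and checking that the coefficient of each power of $u$ vanishes. I expect this verification to be the main obstacle: it is entirely mechanical but bulky, and organizing the left-hand side by powers of $u$ is the cleanest way to keep the bookkeeping under control.

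Finally, the statement for $\overline{w}_{11}$ needs no further computation. Exchanging the meridians $a \leftrightarrow b$ sends $w \leftrightarrow \overline{w}$ and corresponds to the substitution $s_1 \leftrightarrow s_2$, under which $x \leftrightarrow y$ while $z$, $v$, and hence the defining polynomial $(xy - vz)S_{n-1}(v) - (xy-2z)S_{n-2}(v)$ are all invariant. Applying this substitution to the formula for $w_{11}$ yields precisely $\overline{w}_{11} = \dfrac{-1-s_1^2+s_1 s_2 z}{s_2 + s_1^2 s_2 - s_1 z}$, completing the proof.
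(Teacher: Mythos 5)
Your proposal is correct and follows essentially the paper's own route: the paper likewise uses Lemma \ref{chev1} to get $S^2_{n-1}(v) = (1 - rv + r^2)^{-1}$ with $r = (xy-vz)/(xy-2z) = S_{n-2}(v)/S_{n-1}(v)$ on the locus, substitutes into $w_{11} = A\,S^2_{n-1} - B\,S_{n-1}S_{n-2} + s_1 S^2_{n-2}$, and finishes by the same direct polynomial calculation, so your homogeneous $(P:Q) = (B_2:A_2)$ formulation and your (correct) rewriting of the target as $-d_{21}/(s_1 c_{21})$ are just clean reorganizations of that computation. Your $s_1 \leftrightarrow s_2$ symmetry argument for $\overline{w}_{11}$ is likewise the substance behind the paper's remark that ``the proof for $\overline{w}_{11}$ is similar.''
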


\begin{proof}
 By Lemma \ref{chev1} we have $S^2_{k}(v) + S^2 _{k-1}(v) - v S_{k}(v) S_{k-1}(v) = 1$ for all integers $k$.
Hence, on $(xy - v z) S_{n-1}(v) - (xy - 2z)S_{n-2}(v) = 0$ we have
$$S^2_{n-1}(v) = \left(  1 - r v + r^2\right)^{-1}$$ 
where $r = (xy - v z) / (xy - 2z)$. By a direct calculation we have
\begin{eqnarray*}
w_{11} &=& \big[ (c_{12} d_{21} s_1^{-1} + c_{11} d_{21} + c_{11} d_{11} s_1) + s_1 r^2 - (d_{21} + c_{11} s_1 + d_{11} s_1) r \big] \left(  1 - r v + r^2\right)^{-1} \\
       &=& \frac{s_1 s_2 (s_1^2 s_2 + s_1 u-s_2)}{s_1^2-s_1 s_2 u-1} = \frac{-1-s_2^2+s_1 s_2 z}{s_1 + s_1 s_2^2-s_2 z}.
\end{eqnarray*}
The proof for $\overline{w}_{11}$ is similar.
\end{proof}

We now prove Theorem \ref{thm:A-polynomial} for $W_{2n-1}$. 
Let $\rho(\lambda_a) = \left[ \begin{array}{cc}
L_a & * \\
0 & L_a^{-1} \end{array} \right]$. Since $\lambda_a = wa^{-1}$, 
we have $L_a = w_{11} s_1^{-1}$. On $$C_j : = \Big( \{v-2\cos \frac{j\pi}{n} = 0\} \cap \{s_2^2 = (\overline{w}_{11})^2 =1\} \Big)$$ 
by Proposition \ref{components} we have $w_{11} = s_1$. 
Hence $L_a - 1 = 0$ on $C_j$ for all $1\le j \le n-1$.

By Proposition \ref{canonical}, on  
$$C:= \Big( \{(xy - v z) S_{n-1}(v) - (xy - 2z)S_{n-2}(v)=0\} \cap \{s_2^2 = (\overline{w}_{11})^2 =1\} \Big)$$ we have 
$w_{11} = \frac{-1-s_2^2+s_1 s_2 z}{s_1 + s_1 s_2^2-s_2 z}$. Note that on $C$, we have $s_2^2=1$ implies $(\overline{w}_{11})^2 =1$.

With $s_2=1$ we then have $w_{11} = \frac{-2 + s_1 z}{2s_1-z}$. Hence $z = \frac{2(1 + s_1 w_{11})}{s_1 + w_{11}}$. 
Note that $v = 2 + (z-x)^2$. Let $z-x = t$. Then 
$$t = \frac{2(1 + s_1 w_{11})}{s_1 + w_{11}} - (s_1 + s_1^{-1}) 
= (s_1 - s_1^{-1})\big( \frac{w_{11} - s_1}{w_{11} + s_1} \big) 
= (s_1 - s_1^{-1})\Big( \frac{L_a - 1}{L_a + 1} \Big)$$
and $v = 2 + t^2$. Since $y = s_2 + s_2^{-1} =2$ and $v S_{n-1}(v) = S_n(v) + S_{n-2}(v)$ we have 
\begin{eqnarray*}
0 &=& 2x S_{n-1}(v) - z S_n(v) - (2x-z)S_{n-2}(v) \\
  &=& 2x S_{n-1}(v) - (x+t) S_n(v) - (x-t)S_{n-2}(v) \\
  &=& - x t^2 S_{n-1}(v) - t \big( S_n(v) - S_{n-2}(v) \big).
\end{eqnarray*}

If $t=0$ then $w_{11} = \frac{-2 + s_1 x}{2s_1-x} = s_1$. In this case we have $L_a = 1$. 
We now consider the case $x t S_{n-1}(v) + S_n(v) - S_{n-2}(v) =0$. 
Then, by Lemma \ref{expansion} we have 
\begin{eqnarray*}
0 &=& \sum_{i=0}^n \left\{ \binom{n+1+i}{2i+1} - \binom{n-1+i}{2i+1} \right\} t^{2i} 
+ x \sum_{i=0}^{n-1} \binom{n+i}{2i+1} t^{2i+1} \\
&=& \sum_{i=0}^n \left\{ \binom{n+1+i}{2i+1} - \binom{n-1+i}{2i+1} \right\} 
(s_1 - s_1^{-1})^{2i} \left( \frac{L_a -1}{L_a+1} \right)^{2i}\\
&& + \, \sum_{i=0}^{n-1} \binom{n+i}{2i+1} (s_1 + s_1^{-1}) (s_1 - s_1^{-1})^{2i+1} \left( \frac{L_a-1}{L_a+1} \right)^{2i+1}.
\end{eqnarray*} 
The last expression in the above equalities is exactly the polynomial $F(M_a, L_a)$, with $M_a=s_1$, defined in Theorem \ref{thm:A-polynomial}. 
Hence, with $s_2 = 1$, we have $(L_a - 1)F(M_a, L_a)=0$ on $C$. Similarly, with $s_2 = -1$, we obtain the same equation 
$(L_a - 1)F(M_a, L_a)=0$ on $C$. This proves the formula of the A-polynomial for 
the component of $W_{2n-1}$ corresponding to the meridian $a$. 
The one for the component corresponding $b$ is exactly the same.

For $W_{2n}$ and its component corresponding to $a$, we have $L_a M_a^2 - 1 = 0$ on  
$$D_j : = \Big( \{v-2\cos \frac{(2j-1)\pi}{2n+1} = 0\} \cap \{s_2^2 = (\overline{w}_{11})^2 =1\} \Big)$$ 
for $1 \le j \le n$. Moreover, on 
$$D:= \Big( \{z S_{n}(v) - (xy -  z) S_{n-1}(v)=0\} \cap \{s_2^2 = (\overline{w}_{11})^2 =1\} \Big)$$ 
we have $G(M_a, L_a) = 0$ where $G$ is the polynomial defined in Theorem \ref{thm:A-polynomial}.
The same formulas can be obtained for the component of $W_{2n}$ corresponding to $b$.

This completes the proof of Theorem \ref{thm:A-polynomial}.

\subsection{Proof of Theorem \ref{thm:canonical}} \label{Tr-canonical}

We make use of Proposition \ref{prop:hyp} which implies that for each $j=1,2$ 
the factor $A_j^{\text{can}}(M,L)$ of the A-polynomial $A_j(L,M)$ corresponding to a canonical component of $W_k$ (with $k \ge 1$)
has an irreducible factor containing at least 3 monomials in $M, L$. 
This irreducible factor cannot be $L-1$ or $LM^2-1$. 
Hence, from the proof of Theorem \ref{thm:A-polynomial} we conclude that 
the canonical components of $W_{2n-1}$ and $W_{2n}$ are respectively the zero sets of the polynomials 
$(xy - v z) S_{n-1}(v) - (xy - 2z)S_{n-2}(v)$ and
$z S_{n}(v) - (xy -  z) S_{n-1}(v)$. This completes the proof of Theorem \ref{thm:canonical}.

\subsection{Proof of Theorem \ref{thm:volume}}

Recall that $E_{W_k}(\alpha)$ is the cone-manifold of $W_k$ with cone angles $\alpha_1 = \alpha_2 = \alpha$. 
There exists an angle $\alpha_{W_k} \in [\frac{2\pi}{3},\pi)$ such that $E_{W_k}(\alpha)$ is hyperbolic for $\alpha \in (0, \alpha_{W_k})$, Euclidean for $\alpha =\alpha_{W_k}$, and spherical for $\alpha \in (\alpha_{W_k}, \pi)$.

We first consider the case of $W_{2n-1}$. 
By Theorem \ref{thm:canonical} the canonical component of $W_{2n-1}$ is determined by 
$(xy - v z) S_{n-1}(v) - (xy - 2z)S_{n-2}(v)=0$. 
Moreover, by Proposition \ref{canonical}, on this component we have 
$$ 
w_{11} = \frac{-1-s_2^2+s_1 s_2 z}{s_1 + s_1 s_2^2-s_2 z}.
$$
Here we use the same notations as in the previous subsections.

For $\alpha \in (0, \alpha_{W_{2n-1}})$, by the Schlafli formula we have 
$$\text{Vol} \, E_{W_{2n-1}}(\alpha) 
= \int_{\alpha}^{\pi} 2\log \left| w_{11} \right| d\omega 
= \int_{\alpha}^{\pi} 2\log \left| \frac{-1-s_2^2+s_1 s_2 z}{s_1 + s_1 s_2^2-s_2 z} \right| d\omega$$
where $s_1 = s_2 = s = e^{i\omega/2}$ and $z$ (with $\big| \frac{-1-s_2^2+s_1 s_2 z}{s_1 + s_1 s_2^2-s_2 z} \big| \ge 1$) satisfy $$(xy - v z) S_{n-1}(v) - (xy - 2z)S_{n-2}(v)=0.$$
We refer the reader to \cite{DMM} for the volume formula of hyperbolic cone-manifolds of links using the Schlafli formula.

Note that $\big| \frac{-1-s_2^2+s_1 s_2 z}{s_1 + s_1 s_2^2-s_2 z} \big| = \big| \frac{z - (s^{-2}+1)}{z- (s^2+1) } \big| \ge 1$ is equivalent to $\text{Im} \,z \ge 0$. 
Since $x=y=s+s^{-1}$ and $v= x^2 + y^2 + z^2 - xyz -2 = 2x^2+z^2-x^2z-2$, 
the proof of Theorem \ref{thm:volume} for $W_{2n-1}$ is complete. The proof for $W_{2n}$ is similar.
%%%%%%%%%%%%%%%%%%%%%%%%%%%%%%%%%%%%%%%%%%%%%%%%%%%%%%
\section*{Acknowledgements} 
The author has been partially supported by a grant from the Simons Foundation (\#354595 to Anh Tran).

%%%%%%%%%%%%%%%%%%%%%%%%%%%%%%%%%%%%%%%%%%%%%%%%%%%%%%%%%%%%%%%%

\end{document}